\tikzstyle{vertex}=[circle, draw, inner sep=0pt, minimum size=6pt]
\newtheorem{prelem}{{\bf Theorem}}
\newtheorem{theorem}{Theorem}
\newtheorem{lemma}[theorem]{Lemma}
\theoremstyle{definition}
\newtheorem{definition}[theorem]{Definition}
\title{Exact double domination in subdivision, Mycielskian  and middle graphs}
\date{}
\author{
{ Ali Behtoei\thanks{\tt Corresponding author, a.behtoei@sci.ikiu.ac.ir}}\\
Rozhin Zarifian \thanks{\tt rozhinazarifian@gmail.com} \\
{\it \small Department of Mathematics, Faculty of Science,} \\ {\it \small Imam Khomeini International University,} \\
{ \it \small Qazvin, Iran, PO Box: 34148 - 96818.}  
}
\begin{document}
	\maketitle
\begin{abstract}
An exact (or efficient) doubly dominating set for a graph $G=(V,E)$ is a subset $D$ of vertices such that each vertex of $G$ is dominated by exactly two vertices of $D$.
In this paper we study and determine the existence of exact doubly dominating sets for the famous structures subdivision, middle and Mycielskian. Also, we determine  the existence of exact doubly dominating sets  for their complements.
\vspace{3mm}\\
{\bf Keywords:} Exact double domination, subdivision, middle, Mycielskian. \\
{\bf MSC 2010}: 05C69, 05C76.
\end{abstract}

\section{Introduction}

Let $G=(V,E)$ be a finite and simple  graph with  vertex set $V=V(G)$ and edge set $E=E(G)$. 
The set $N_G(u)$ denotes the (open) neighborhood of $u\in V(G)$, which means the set of all adjacent vertices  to $u$ in $G$, the closed neighborhood of $u$ is $N_G[u]=N_G(u)\cup\{u\}$ and the degree of  $u$ is $\deg_G(u)=|N_G(u)|$. An isolated vertex is a vertex of degree zero.
A clique $C$ in $G$ is a subset of vertices of $G$ such that every two distinct vertices in $C$ are adjacent and hence, the induced subgraph of $G$ on it is a complete graph. 
A matching in $G$ is a set of pairwise non-incident edges of $E$ and,
a perfect matching is a matching in which every vertex of the graph is incident to exactly one edge of the matching.
For a subset of vertices $X\subseteq V$, $G[X]$ denotes the subgraph induced by $X$. Note that $X$ is an independent set in $G$ If and only if $G[X]$ contains no edge. The complement graph of $G$ is denoted by $\overline{G}$.
In recent years much attention drawn to the domination theory which has different applications in diverse areas and is an interesting branch in graph theory.
Each vertex of a graph is said to dominate every vertex in its closed neighborhood. 
A subset $S$ of $V(G)$ is a dominating set for $G$ if each vertex in $V(G) \setminus S$ is adjacent to at least one vertex in $S$. The domination number of $G,$ denoted by $\gamma(G),$ is the minimum size of a dominating set of $G$. 
In \cite{Fink} Fink and Jacobson generalized the concept of dominating sets. 
Let $k$ be a positive integer. A subset $D$ of vertices in $G$ is a $k$-dominating set if each vertex in $V(G)\setminus D$ is adjacent to at least $k$ vertices in $D$. The $k$-domination number $\gamma_k(G)$ is the minimum cardinality of a $k$-dominating set of $G$. Hence, for $k=1$, $1$-dominating sets are the classical dominating sets. 
A vertex subset $D$ is a perfect $k$-dominating set if each vertex $v$ of $G$, not in $D$, is
adjacent to exactly $k$ vertices of $D$. The perfect k-domination problem is NP-complete for general graphs.  Note that every nontrivial graph has a perfect $k$-dominating set,
since the entire vertex set is such a set and there are graphs whose only perfect
$k$-dominating set is their entire vertex set (consider the stars $K_{1,t}$ for $1<k<t$). 
A paired-dominating set  is a dominating set of vertices whose induced subgraph has a perfect matching.
A set $S\subseteq V$ is a double dominating set for $G$ if each vertex in $V$ is dominated by at least two vertices in $S$, note that this concept is different from $2$-domination. 
Cockayne et al. in \cite{PerfectDom} called a vertex subset $D$  of $V$ to be a perfect dominating set of $G$ if every vertex in $V(G)\setminus D$ is adjacent to exactly one vertex of $D$, see also \cite{Fellows}. 
Note that sets that are both perfect dominating and independent are called perfect codes by
Biggs in \cite{Biggs} or efficient dominating sets by Bange, Barkauskas and Slater in \cite{BangeSiam}.
Analogously to  perfect or  efficient domination, 
 Harary and Haynes in \cite{Harary2000} defined an efficient doubly dominating set as a subset $D$ of vertices such that each vertex of $G$ is dominated by exactly two vertices of $D$, i.e $|N_G[v]\cap D|=2$ for each $v\in V$,  see also  \cite{Harary1996}. 
Chellali, Khelladi and Maffray in \cite{Chellali2005} prefer to use the phrase exact doubly dominating set for this concept and they show that the complexity of the problem of deciding whether a graph admits an exact doubly dominating set is NP-Complete.  
Note that not all graphs  admit an exact doubly dominating set (for example consider the 4-cycle $C_4$ or the stars). 
In \cite{Chellali2005} a constructive characterization of those trees that admit an exact
doubly dominating set is provided, and they establish a necessary and sufficient
condition for the existence of an exact doubly dominating set in a connected cubic graph. 
 
\begin{theorem} \label{Matching}
\cite{Chellali2005} The vertex set of every exact doubly dominating set induces a matching. 
Moreover, if $G$ has an exact doubly dominating set, then all such sets have the same size.
\end{theorem}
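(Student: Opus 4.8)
The plan is to prove the two assertions separately, the first by a direct local argument and the second by a short linear-algebra computation.

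For the matching statement I would fix an exact doubly dominating set $D$ and apply the defining condition $|N_G[v]\cap D|=2$ to an arbitrary vertex $d\in D$. Since $d\in N_G[d]\cap D$, the single remaining element of $N_G[d]\cap D$ must be a neighbor of $d$, so $|N_G(d)\cap D|=1$. Thus every vertex of $D$ has exactly one neighbor inside $D$, i.e.\ $G[D]$ is $1$-regular; a $1$-regular graph is a disjoint union of edges, so $D$ induces a (perfect) matching on itself. This step is routine.

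For the "same size" claim I would encode the condition linearly. Let $N=A_G+I_n$ be the closed-neighborhood matrix of $G$ (with $A_G$ the adjacency matrix and $n=|V|$), and for $S\subseteq V$ let $\mathbf{x}_S\in\{0,1\}^n$ be its indicator vector. A direct check shows that the $v$-th coordinate of $N\mathbf{x}_S$ equals $|N_G[v]\cap S|$; hence $S$ is exact doubly dominating precisely when $N\mathbf{x}_S=2\,\mathbf{1}$, where $\mathbf{1}$ is the all-ones vector. Now suppose $D_1,D_2$ are two exact doubly dominating sets, so $N\mathbf{x}_{D_1}=N\mathbf{x}_{D_2}=2\,\mathbf{1}$ and therefore $\mathbf{x}_{D_1}-\mathbf{x}_{D_2}\in\ker N$. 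Because $G$ admits at least one exact doubly dominating set, $\mathbf{1}=\tfrac12 N\mathbf{x}_{D_1}$ lies in the image of $N$; and since $N$ is a real symmetric matrix, $\operatorname{im}(N)=(\ker N)^{\perp}$. Consequently $\mathbf{1}$ is orthogonal to $\ker N$, so $\langle \mathbf{1},\,\mathbf{x}_{D_1}-\mathbf{x}_{D_2}\rangle=0$, and as $\langle\mathbf{1},\mathbf{x}_{D_i}\rangle=|D_i|$ this yields $|D_1|=|D_2|$.

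I expect the main obstacle to be resisting the temptation of a purely combinatorial double count: summing $|N_G[v]\cap D|=2$ over all $v$ only gives $2|V|=|D|+\sum_{d\in D}\deg_G(d)$, which ties the size of $D$ to its degree sum but does not by itself force two different sets to have equal size. The symmetry of the closed-neighborhood matrix---hence the orthogonal splitting $\mathbb{R}^n=\ker N\oplus\operatorname{im}(N)$---is exactly what converts "both sets solve the same linear system" into "both sets have the same weight," and it is the one point I would state carefully, in particular that it is the \emph{existence} of a solution that places $\mathbf{1}$ in $\operatorname{im}(N)$.
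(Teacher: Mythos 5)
Your proof is correct on both counts. Note first that the paper itself supplies no proof of this statement: it is quoted verbatim from the cited reference of Chellali, Khelladi and Maffray, so there is nothing internal to the paper to compare against. Your first paragraph is the standard local argument (each $d\in D$ satisfies $|N_G(d)\cap D|=1$, so $G[D]$ is $1$-regular) and is exactly right. For the equal-size claim, your linear-algebraic route is valid, but it is a heavier packaging of a one-line double count that you came close to and then dismissed: the count that fails is the one over all of $V$, whereas the one that works is over the \emph{other} dominating set. Concretely, for two exact doubly dominating sets $D_1,D_2$,
\[
2|D_1|=\sum_{u\in D_1}\big|N_G[u]\cap D_2\big|
=\big|\{(u,v):~u\in D_1,~v\in D_2,~v\in N_G[u]\}\big|
=\sum_{v\in D_2}\big|N_G[v]\cap D_1\big|=2|D_2|,
\]
using only that $v\in N_G[u]$ if and only if $u\in N_G[v]$. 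This is precisely the identity $\mathbf{x}_{D_2}^{T}N\mathbf{x}_{D_1}=\mathbf{x}_{D_1}^{T}N\mathbf{x}_{D_2}$ that underlies your kernel/image argument, so the two proofs have the same mathematical content; the combinatorial version simply avoids invoking the orthogonal decomposition $\mathbb{R}^n=\ker N\oplus\operatorname{im}(N)$ and the observation that existence of a solution places $\mathbf{1}$ in $\operatorname{im}(N)$. Your closing caveat about the ``purely combinatorial double count'' is therefore a little too pessimistic: the symmetry you need is already available combinatorially, and what your spectral framing buys is generality (the same argument shows constancy of $|D|$ for exact $k$-domination for any $k$), at the cost of machinery the elementary count does not need.
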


\begin{theorem} \label{PnCn}
\cite{Chellali2005} \\ a) A path $P_n$ has an exact doubly dominating set if and only
if $n\equiv 2~ (mod ~3)$. If this holds, then the size of any such set is ${2(n + 1)\over 3}$. \\
b) A cycle $C_n$ has an exact doubly dominating set if and only
if $n\equiv 0~ (mod ~3)$. If this holds, then the size of any such set is ${2n \over 3}$. 
\end{theorem}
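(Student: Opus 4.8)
The plan is to translate the exactness condition $|N_G[v]\cap D|=2$ into purely local constraints on the characteristic function of $D$ and then to exploit the near one-dimensional structure of paths and cycles to pin down $D$ completely. Writing $\chi(v)=1$ when $v\in D$ and $\chi(v)=0$ otherwise, the condition at a vertex $v$ of degree two (every vertex of $C_n$, and every internal vertex of $P_n$) splits into two rules: if $v\in D$ then exactly one of its two neighbors lies in $D$, and if $v\notin D$ then both of its neighbors lie in $D$. At a degree-one vertex $v$ of $P_n$ we have $N_G[v]=\{v,w\}$ with $|N_G[v]|=2$, so exactness forces both $v$ and its unique neighbor $w$ into $D$.

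For the path, first I would use the endpoint rule to conclude that the two leftmost vertices $v_1,v_2$ both belong to $D$. From here the two degree-two rules propagate deterministically: $v_2\in D$ already has the neighbor $v_1\in D$, so it may have no other neighbor in $D$, forcing $v_3\notin D$; then $v_3\notin D$ forces $v_4\in D$; then $v_4\in D$ (whose neighbor $v_3\notin D$) forces $v_5\in D$; and so on. This shows that $D$, if it exists, is unique and follows the period-three pattern $110\,110\,\cdots$, i.e.\ $v_i\in D$ exactly when $i\not\equiv 0\pmod 3$. The only remaining freedom is consistency at the right endpoint, where the endpoint rule demands $v_{n-1},v_n\in D$. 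Matching this against the forced pattern gives $n\not\equiv 0$ and $n-1\not\equiv 0\pmod 3$, equivalently $n\equiv 2\pmod 3$, and conversely when $n\equiv 2\pmod 3$ the displayed pattern satisfies every rule at both ends. Counting the ones (all positions except the multiples of $3$) when $n=3m+2$ yields $2m+2=\tfrac{2(n+1)}{3}$.

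For the cycle there are no endpoints, so I would argue through maximal runs of consecutive vertices of $D$. A run of a single vertex of $D$ is impossible, since such a vertex would have no neighbor in $D$, violating the first rule; a run of three or more is impossible, since an interior vertex of the run would have two neighbors in $D$; hence every maximal run of vertices in $D$ has length exactly two. Dually, a gap of two or more consecutive vertices outside $D$ is impossible, since an end vertex of such a gap would have a neighbor outside $D$ and so fewer than two neighbors in $D$; hence every gap between consecutive $D$-runs has length exactly one. (The degenerate cases $D=\varnothing$ and $D=V$ are excluded directly, as they give $|N_G[v]\cap D|\in\{0,3\}$.) Therefore the cyclic labeling consists of blocks $110$ of length three, which close up around the cycle precisely when $3\mid n$; the number of blocks is then $n/3$ and the number of ones is $2n/3$. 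Theorem~\ref{Matching} guarantees that this common size is the size of every exact doubly dominating set.

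The only delicate points are the boundary bookkeeping in the path (making sure the forced pattern is genuinely unique and that both endpoint conditions are met simultaneously) and the verification in the cycle that the run-length restrictions leave no labeling other than the periodic one; both are elementary once the local rules are in place, so I do not anticipate a substantive obstacle beyond careful case handling.
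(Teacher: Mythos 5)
Your argument is correct, and both parts check out: the forced pattern $110\,110\cdots$ on the path (with the two leaf conditions pinning down $n\equiv 2\pmod 3$ and the count $2(n+1)/3$) and the run-length analysis on the cycle (runs of $D$ of length exactly two separated by gaps of length exactly one, forcing $3\mid n$ and $|D|=2n/3$) are sound, including the degenerate exclusions. Note, however, that the paper does not prove this statement at all --- it is quoted verbatim from Chellali, Khelladi and Maffray \cite{Chellali2005} as background --- so there is no in-paper proof to compare against; your local-rule propagation is a perfectly good self-contained derivation, and it matches the standard argument for such one-dimensional exact domination results.
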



\section{Subdivision, Mycielskian and middle }

The subdivision operation  is an operation that replaces any edge of a graph by a path of order at least two.  
 If each edge is replaced by a path of order three (i.e., 1-subdividing each edge of $G$), then the subdivision graph is denoted by $S(G)$, see $S(P_5)$ in Figure \ref{subdiv-myciels}. 
Domination number and identifying code number of the
subdivision of some famous families of graphs are investigated and determined in \cite{S-Ahmadi}. 
Some upper and lower bounds for the mixed metric dimension of $S(G)$ is provided in \cite{Yero1}.
The minimum number of edges that must be subdivided in order to increase the total $k$-rainbow domination number of a graph is considered in \cite{REF4}. 
Also,  $2-$rainbow domination number of the subdivision graph of some famous families of graphs is determined in \cite{Salkhori}.
Recall that an edge contraction is an operation that removes an edge while simultaneously merging the two (end) vertices that it previously joined.
In the following, we first investigate the existence of exact doubly dominating sets in the subdivision graph $S(G)$ (see Theorem \ref{S(G)}).

\begin{center}
\begin{tikzpicture}
[inner sep=0.5mm, place/.style={circle,draw=black,fill=red,thick}]

\node[circle,draw=black,fill=white,thick] (v1) at (0,0) [label=below:$v_1$] {};
\node[circle,draw=black,fill=white,thick] (v2) at (1,0) [label=below:$v_2$] {} edge [-,thick,blue](v1) ; 
\node[circle,draw=black,fill=white,thick] (v3) at (2,0) [label=below:$v_3$] {} edge [-,thick,blue](v2) ;
\node[circle,draw=black,fill=white,thick] (v4) at (3,0) [label=below:$v_4$] {} edge [-,thick,blue](v3) ;
\node[circle,draw=black,fill=white,thick] (v5) at (4,0) [label=below:$v_5$] {} edge [-,thick,blue](v4) ;
\node (dots) at (2,-1) [label=center: $P_5$]{};

\node[circle,draw=black,fill=white,thick] (v1) at (5,0) [label=below:$v_1$] {};
\node[circle,draw=black,fill=white,thick] (v2) at (6,0) [label=below:$v_2$] {} edge [-,thick,blue](v1) ; 
\node[circle,draw=black,fill=white,thick] (v3) at (7,0) [label=below:$v_3$] {} edge [-,thick,blue](v2) ;
\node[circle,draw=black,fill=white,thick] (v4) at (8,0) [label=below:$v_4$] {} edge [-,thick,blue](v3) ;
\node[circle,draw=black,fill=white,thick] (v5) at (9,0) [label=below:$v_5$] {} edge [-,thick,blue](v4) ;
\node[circle,draw=black,fill=orange,thick] (e1) at (5.5,0) [label=above:$z_{_{12}}$] {};
\node[circle,draw=black,fill=orange,thick] (e2) at (6.5,0) [label=above:$z_{_{23}}$] {};
\node[circle,draw=black,fill=orange,thick] (e3) at (7.5,0) [label=above:$z_{_{34}}$] {};
\node[circle,draw=black,fill=orange,thick] (e4) at (8.5,0) [label=above:$z_{_{45}}$] {};
\node (dots) at (7,-1) [label=center: $S(P_5)$]{};

\node[circle,draw=black,fill=white,thick] (v1) at (10,0) [label=below:$v_1$] {};
\node[circle,draw=black,fill=white,thick] (v2) at (11,0) [label=below:$v_2$] {} edge [-,thick,blue](v1) ; 
\node[circle,draw=black,fill=white,thick] (v3) at (12,0) [label=below:$v_3$] {} edge [-,thick,blue](v2) ;
\node[circle,draw=black,fill=white,thick] (v4) at (13,0) [label=below:$v_4$] {} edge [-,thick,blue](v3) ;
\node[circle,draw=black,fill=white,thick] (v5) at (14,0) [label=below:$v_5$] {} edge [-,thick,blue](v4) ;
\node[circle,draw=black,fill=white,thick] (u1) at (10,1) [label=below:$u_1$] {}edge [-,thick,blue](v2) ;
\node[circle,draw=black,fill=white,thick] (u2) at (11,1) [label=below:$u_2$] {} edge [-,thick,blue](v1) edge [-,thick,blue](v3); 
\node[circle,draw=black,fill=white,thick] (u3) at (12,1) [label=below:$u_3$] {} edge [-,thick,blue](v2) edge [-,thick,blue](v4);
\node[circle,draw=black,fill=white,thick] (u4) at (13,1) [label=below:$u_4$] {} edge [-,thick,blue](v3) edge [-,thick,blue](v5);
\node[circle,draw=black,fill=white,thick] (u5) at (14,1) [label=below:$u_5$] {} edge [-,thick,blue](v4) ;
\node[circle,draw=black,fill=white,thick] (w) at (12,1.5) [label=above:$w$] {} edge [-,thick,blue](u1) edge [-,thick,blue](u2) edge [-,thick,blue](u3) edge [-,thick,blue](u4) edge [-,thick,blue](u5);
\node (dots) at (12,-1) [label=center: $\mu(P_5)$]{};
\end{tikzpicture}
\captionof{figure}{    \label{subdiv-myciels}}
\end{center}

\begin{lemma} \label{NotV(G)}
Let $G$ be an $n$-vertex graph and assume that there exists an exact doubly dominating set $D$ for its subdivision $S(G)$. Then,  $\big{|}D\cap V(G)\big{|} \leq n-1$ and the equality holds if and only if  $G\in \{P_3,C_3\}$.
\end{lemma}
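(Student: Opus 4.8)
The plan is to exploit the bipartite structure of $S(G)$. Write $V(S(G))=V(G)\cup Z$, where $Z=\{z_e : e\in E(G)\}$ is the set of subdivision vertices; every edge of $S(G)$ joins a vertex of $V(G)$ to a vertex of $Z$, so $S(G)$ is bipartite with parts $V(G)$ and $Z$. Set $D_V=D\cap V(G)$ and $D_Z=D\cap Z$. Since $|N_{S(G)}[d]\cap D|=2$ and $d\in N_{S(G)}[d]$ for each $d\in D$, every vertex of $D$ has exactly one neighbour inside $D$; thus, by Theorem \ref{Matching}, $S(G)[D]$ is in fact a \emph{perfect} matching on $D$. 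Because its edges must respect the bipartition, each matched pair consists of one vertex of $D_V$ and one of $D_Z$, and therefore $|D_V|=|D_Z|$. This identity will carry much of the argument.

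Next I would record the local constraints coming from the subdivision vertices. Each $z_e$ with $e=uv$ has $N_{S(G)}[z_e]=\{z_e,u,v\}$, so $|N_{S(G)}[z_e]\cap D|=2$ forces a dichotomy: if $z_e\notin D$ then both $u,v\in D$, while if $z_e\in D$ then exactly one of $u,v$ lies in $D$. In particular every edge of $G$ has at least one endpoint in $D_V$, and the key consequence, used repeatedly, is that whenever both endpoints of an edge $e$ lie in $D$ we must have $z_e\notin D$ (otherwise $|N_{S(G)}[z_e]\cap D|\ge 3$). For the upper bound I then argue by contradiction: if $D_V=V(G)$, both endpoints of every edge lie in $D$, so this observation forces $D_Z=\emptyset$; but then $|D_V|=|D_Z|=0$ by Step~1, contradicting $D_V=V(G)$ for $n\ge 1$. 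Hence $|D_V|\le n-1$.

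Finally, for the equality case I assume $|D_V|=n-1$ and let $x$ be the unique vertex of $V(G)\setminus D$. For an edge $xv$ incident to $x$ we have $v\in D$ but $x\notin D$, so $z_{xv}\notin D$ would give $|N_{S(G)}[z_{xv}]\cap D|=1$; thus $z_e\in D$ for \emph{every} edge $e$ incident to $x$. Reading the requirement $|N_{S(G)}[x]\cap D|=2$ (with $x\notin D$) shows exactly two subdivision neighbours of $x$ lie in $D$, so $\deg_G(x)=2$. Meanwhile every edge not incident to $x$ has both endpoints in $D$, hence its subdivision vertex is outside $D$; therefore $D_Z=\{z_{xa},z_{xb}\}$ where $a,b$ are the neighbours of $x$, giving $|D_Z|=2$. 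By Step~1, $n-1=|D_V|=|D_Z|=2$, so $n=3$, and a degree-$2$ vertex in a $3$-vertex graph leaves only $G\in\{P_3,C_3\}$ ($x$ the centre, resp.\ any vertex). For the converse, since $S(P_3)=P_5$ and $S(C_3)=C_6$, Theorem \ref{PnCn} supplies exact doubly dominating sets realizing $|D_V|=n-1=2$ in both cases.

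I expect the main obstacle to be organizing the local case analysis at the subdivision vertices cleanly and in the right order, since it is precisely the ``both endpoints in $D$ $\Rightarrow$ midpoint out of $D$'' dichotomy that simultaneously drives the bound $|D_V|\le n-1$ and, in the extremal case, pins the degree of the omitted vertex and the size of $D_Z$ down far enough to force $n=3$. Once the perfect-matching identity $|D_V|=|D_Z|$ and this dichotomy are in place, the remaining steps are short bookkeeping.
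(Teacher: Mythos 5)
Your proof is correct, and while it rests on the same local analysis at the subdivision vertices as the paper's (the closed neighbourhood $\{z_{ij},v_i,v_j\}$ must meet $D$ in exactly two vertices, which yields the dichotomy you state), you finish both halves of the argument by a different device: the identity $|D\cap V(G)|=|D\cap Z|$, obtained by observing that $D$ induces a perfect matching on itself and that $S(G)$ is bipartite with parts $V(G)$ and $Z$. The paper does not use this identity here. For the upper bound it instead splits into the two cases $D=V(G)$ (some $v_i$ is dominated only once, since $V(G)$ is independent in $S(G)$) and $V(G)\subsetneq D$ (some $z_{ij}\in D$ is dominated three times); your parity argument replaces this case split. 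For the equality case both proofs pin down $\deg_G(x)=2$ identically, but the paper then rules out a fourth vertex $v_r$ directly, by showing $N_{S(G)}[v_r]\cap D=\{v_r\}$, whereas you compute $|D\cap Z|=2$ and invoke the parity identity to force $n=3$. The two routes are of comparable length; yours packages the counting once and reuses it, which is arguably cleaner, while the paper's is self-contained and does not need Theorem \ref{Matching} at all in this lemma. Your appeal to Theorem \ref{PnCn} for the converse is also fine (the paper simply asserts the bound is attained for $P_3$ and $C_3$), though note that the theorem only gives existence and size of exact doubly dominating sets in $P_5=S(P_3)$ and $C_6=S(C_3)$; one still has to observe that such a set places exactly two of its four vertices on $V(G)$, which is a quick check.
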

\begin{proof}
Assume that  $V(G)=\{v_1,v_2,...,v_n\}$ and  $V(S(G))=V(G)\cup Z$ in which 
$$Z=\big\{z_{ij}:~v_iv_j\in E(G) \big\},~N_{S(G)}(z_{ij})=\{v_i,v_j\}.$$
Note that $V(G)$ induces an independent set in $S(G)$ and for each $v_i\in V(G)$ we have $$|N_{S(G)}[v_i]\cap V(G)|=|\{v_i\}|=1\neq 2.$$ 
Hence, $D\neq V(G)$.
If $V(G)\subsetneqq D$, then for each $z_{_{ij}} \in Z$ we obviously  have $$|N_{S(G)}[z_{_{ij}}]\cap D|=|\{v_i,v_j,z_{ij}\}|=3,$$ which is a contradiction.
Thus,  $\big{|}D\cap V(G)\big{|} \leq n-1$. \\
It is easy to check that the bound is attained for $G\in \{P_3,C_3\}$. 
Now let $G$ be a graph for which  $\big{|}D\cap V(G)\big{|} = n-1$. Assume that $V(G)\setminus D=\{v_i\}$.
Since $D$ is an exact doubly dominating set and $v_i\notin D$, there exist exactly two indices $j$ and $j'$ such that $\{ z_{ij}, z_{ij'}\}\subseteq D \cap Z$. Thus, $\{v_iv_j,v_iv_{j'}\}\subseteq E(G)$. If there exists an index $j''$ such that $v_iv_{j''}\in E(G)$, 
then $z_{ij''}\notin D$ and this implies that $N_{S(G)}[z_{ij''}]\cap D=\{v_{j''}\}$, which is a contradiction. Thus, $v_i$ is a vertex of degree two in $G$.
Since $V(G)\setminus \{v_i\}\subseteq D$, for each $z_{rs}\in Z\setminus\{z_{ij},z_{ij'}\}$ we have $\{v_r,v_s\}\subseteq N_{S(G)}[z_{rs}]\cap D$.
Hence, $z_{rs}\notin D$. 
This means that for each vertex $v_r\notin\{v_i,v_j,v_{j'}\}$ (if any one exists!) we have $$N_{S(G)}[v_r]\cap D =\{v_r\},$$ which is a contradiction.
Thus, $V(G)=\{v_i,v_j,v_{j'}\}$ and hence, $G\in \{P_3,C_3\}$.
\end{proof}

\begin{definition}
Let $\Gamma$ be the family of all finite and simple graphs containing a perfect matching. 
Suppose that $H\in \Gamma$ and  $M\subseteq E(H)$ is a perfect matching for $H$. 
Define the new graph $S_M(H)$ to be the graph obtained from $H$ by 1-subdividing each edge of $M$.
Note that $|V(H)|=2|M|$ and $$|V(S_M(H))|=|V(H)|+|M|=3~\! |M|={3 ~\!|V(H)|\over 2},$$ 
which specially implies that $|V(S_M(H))|$ is a multiple of $3$.
Also, let 
$$\mathcal{G}=\big\{ S_M(H):~ H\in \Gamma ~ \mbox{and $M$ is a perfect matching for} ~ H \big\}.$$ 
\end{definition}

\begin{theorem} \label{S(G)}
Let $G$ be a  graph. Then, there exists an exact doubly dominating set $D$ for $S(G)$ if and only if $G\in \mathcal{G}$.
In this case we have $|D|={4 ~\!\! |V(G)| \over 3}$.
\end{theorem}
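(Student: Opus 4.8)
The statement is an equivalence, so I would prove the two directions separately, keeping the notation $V(S(G))=V(G)\cup Z$ from Lemma~\ref{NotV(G)} and using Theorem~\ref{Matching} as the structural backbone. The one observation I would install at the very start is that $S(G)$ is bipartite with independent parts $V(G)$ and $Z$, since every edge of $S(G)$ joins some $v_i$ to some $z_{ij}$.

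For sufficiency ($\Leftarrow$), suppose $G=S_M(H)$ with $M$ a perfect matching of $H$, and write $V(G)=V(H)\cup\{m_e:e\in M\}$ where $m_e$ subdivides $e=\{x_e,y_e\}$. The plan is to exhibit the explicit set $D=V(H)\cup\{z_{x_em_e},z_{m_ey_e}:e\in M\}$ and to verify $|N_{S(G)}[v]\cap D|=2$ for each of the four vertex types: the $H$-vertices, the midpoints $m_e$, the $z$'s adjacent to a midpoint, and the $z$'s subdividing a non-matching edge of $H$. This is a routine case check, and the count $|D|=|V(H)|+2|M|=4|M|=\frac{4}{3}|V(G)|$ then follows from $|V(H)|=2|M|$ and $|V(G)|=3|M|$.

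For necessity ($\Rightarrow$), given an exact doubly dominating set $D$ of $S(G)$, I would first use Theorem~\ref{Matching}: $D$ induces a matching, and by bipartiteness every edge of this matching joins $A:=D\cap V(G)$ to $B:=D\cap Z$, giving a perfect matching between $A$ and $B$, so $|A|=|B|$. I would then translate exactness into two local rules: $z_{ij}\notin D$ forces $v_i,v_j\in A$, while $v_i\notin A$ forces exactly two incident $z$'s into $D$. From these I would deduce, in order, that each $w\in W:=V(G)\setminus A$ has degree exactly two in $G$ with both neighbors in $A$ (hence $W$ is independent), that every $z\in B$ has exactly one endpoint in $W$ and is matched to its $A$-endpoint, and therefore $|B|=2|W|$, $|A|=2|W|$, $|V(G)|=3|W|$, and $|D|=4|W|=\frac{4}{3}|V(G)|$.

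The reconstruction then builds $H$ on vertex set $A$ by keeping the $G$-edges inside $A$ and adding, for each $w\in W$ with $N_G(w)=\{a_w,b_w\}$, the pair $\{a_w,b_w\}$; setting $M=\{\{a_w,b_w\}:w\in W\}$ and identifying each $w$ with the midpoint of its pair should recover $G=S_M(H)$. The matching argument shows each $a\in A$ has exactly one neighbor in $W$ (two would force two $B$-vertices to match the same $a$), so the pairs are disjoint and cover $A$, making $M$ a perfect matching. The hard part, and the step I expect to carry the real content, is verifying that $H$ is a genuine \emph{simple} graph: one must rule out that some added pair $\{a_w,b_w\}$ already spans an edge of $G$ inside $A$, i.e. a degree-two vertex $w$ whose two neighbors are themselves adjacent, since this would produce a multi-edge in $H$. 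Controlling these triangle-like configurations---either showing they cannot arise under exact double domination or isolating the small graphs where they do---is where I anticipate the delicate work; everything else is bookkeeping organized around the $A/B/W$ partition.
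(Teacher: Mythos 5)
Your two directions track the paper's proof almost verbatim: the same explicit set $D$ for sufficiency (your $V(H)\cup\{z_{x_em_e},z_{m_ey_e}\}$ is the paper's $(V(G)\setminus\Omega)\cup\bigcup_{v_i\in\Omega}N_{S(G)}(v_i)$), and for necessity the same partition --- your $A$, $B$, $W$ are the paper's $D\cap V(G)$, $D\cap Z$, $\Omega$ --- with the same local deductions from exactness and Theorem~\ref{Matching}. Your reconstruction of $H$ by adjoining the pair $\{a_w,b_w\}$ for each $w\in W$ is equivalent to the paper's reconstruction by contracting one of the two edges at each $w$. The one step you leave open is therefore the only substantive issue, and it is a genuine gap in your proposal: you never verify that $H$ is simple, i.e.\ that no $w\in W$ has its two neighbours already adjacent in $G$.

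Your instinct that this is where the real content lies is sound, and the first of your two suggested resolutions (showing such configurations cannot arise) fails. Take $G=C_3$: then $S(G)=C_6$, which has an exact doubly dominating set by Theorem~\ref{PnCn}(b) (Lemma~\ref{NotV(G)} already records that $C_3$ attains $|D\cap V(G)|=n-1$), and for any such $D$ the unique vertex $w\in V(G)\setminus D$ has $N_G(w)=\{a,b\}$ with $ab\in E(G)$. Your construction (and the paper's contraction) then places a second edge on $\{a,b\}$; the only simple two-vertex graph with a perfect matching is $K_2$, and $S_M(K_2)=P_3\neq C_3$, so $C_3\notin\mathcal{G}$ as $\Gamma$ is defined, even though $S(C_3)$ does admit an exact doubly dominating set. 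Disjoint unions of such triangles give further examples. Hence the equivalence cannot be completed along these lines without either enlarging $\Gamma$ to admit multigraphs or explicitly isolating these exceptional graphs; note that the paper's own proof simply asserts $S_M(H)=G$ after contraction without checking that the contraction yields a simple graph, so the defect you anticipated is present there as well.
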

\begin{proof}
At first, assume that $G\in \mathcal{G}$. Hence, $G=S_M(H)$ for some graph $H$ and some perfect matching $M$ of $H$. 
Let $\Omega$ be the set of (new) vertices of $G$ which are obtained by 1-subdividing each edge of matching $M$ in $H$. 
Hence, $V(G)=V(H)\cup \Omega$. 
Assume that  $V(G)=\{v_1,v_2,...,v_n\}$ and  $V(S(G))=V(G)\cup Z$ in which 
$$Z=\big\{z_{ij}:~v_iv_j\in E(G)\big\},~~~~N_{S(G)}(z_{ij})=\{v_i,v_j\}.$$
Note that for each $v_i\in \Omega$ we have $\deg_G(v_i)=2$.
Since $M$ is a perfect matching for $H$, for each pair of distinct vertices $v_i,v_{i'}$ in $\Omega$ we have $N_G(v_i)\cap N_G(v_{i'})=\emptyset$ and $$\bigcup_{v_i\in \Omega} N_G(v_i)=V(G)\setminus \Omega . $$
Specially, $\Omega$ is a dominating set for $G$ and $|V(G)|=3|\Omega|$ (or equivalently, $|\Omega|={n\over 3}$).
Now consider the subdivision graph $S(G)$ and let 
$$D=\big( V(G)\setminus \Omega \big) \bigcup \bigg( \bigcup_{v_i\in \Omega} N_{S(G)}(v_i) \bigg)   .$$
For each $v_i\in \Omega$ we have 
$$\big| D\cap N_{S(G)}[v_i] \big|=|N_{S(G)}(v_i)|=\deg_G(v_i)=2.$$
For each $v_j\in V(G)\setminus \Omega$, there exists unique $v_i\in \Omega$ such that $v_jv_i\in E(G)$ and hence, 
$$\big| D\cap N_{S(G)}[v_j] \big|=\big|\{v_j,z_{ij}\}\big|=2.$$
For each $z_{rs}\in Z$, if $\{v_r,v_s\}\cap \Omega=\emptyset$, then  we have
$$\big| D\cap N_{S(G)}[z_{rs}] \big|=\big|\{v_r,v_s\}\big|=2,$$
and otherwise, we have  $\big|\{v_r,v_s\}\cap\Omega\big|=1$ and  $z_{rs} \in D$, which implies that 
$$\big| D\cap N_{S(G)}[z_{rs}] \big|=2.$$
Therefore, $D$ is an exact doubly dominating set for $S(G)$ and, we obviously have
$$|D|=(n-{n\over 3})+{n\over 3}\times 2={4n\over 3},$$
as desired.\\
Now assume that $G$ is an $n$-vertex graph such that  there exists an exact doubly dominating set $D$ for $S(G)$. We want to show that $G\in \mathcal{G}$.
As before, assume that  $V(G)=\{v_1,v_2,...,v_n\}$ and  $V(S(G))=V(G)\cup Z$ in which 
$$Z=\big\{z_{ij}:~v_iv_j\in E(G)\big\},~~~~N_{S(G)}(z_{ij})=\{v_i,v_j\}.$$
Let $\Omega=V(G)\setminus D$. 
By Lemma \ref{NotV(G)}, we have $V(G)\setminus D\neq \emptyset$ and hence, $\Omega \neq \emptyset$.
Since $D$ is an exact doubly dominating set for $S(G)$, for each $z_{ij}\in V(S(G))$ we have 
$$2=\big{|}D\cap N_{S(G)}[z_{ij}]\big{|}=\big{|}D\cap \{v_i,z_{ij},v_j\}\big{|}.$$
Thus, if $v_i\in \Omega$, then $\{v_j,z_{ij}\}\subseteq D$ for each $v_j\in N_G(v_i)$. Specially,  $\big| D\cap N_{S(G)}[v_i] \big|=2$ implies that $\deg_G(v_i)=2$.
By Theorem \ref{Matching}, $D$ induces a matching in $S(G)$. Thus, if $v_{j'}\in D$, then there exists unique $v_{i'}\in N_G(v_{j'})$ such that $z_{i'j'}\in D \cap N_{S(G)}(v_{j'})$. Hence,  $v_{i'}\notin D$ which means that $v_{i'}\in \Omega$. Now the previous statement implies that $\deg_G(v_{i'})=2$. 
Therefore, for each pair of different vertices $v_i,v_{i'}$ in $\Omega$ we have $N_G(v_i)\cap N_G(v_{i'})=\emptyset$ and $$\bigcup_{v_i\in \Omega} N_G(v_i)=V(G)\setminus \Omega . $$
Specially, we can see that 
$$|V(G)|=|\Omega|+|V(G)\setminus \Omega|=|\Omega|+2|\Omega|=3|\Omega|,$$
and
$$ |D|=|D\cap V(G)|+|D\cap Z|=\big( |V(G)|-|\Omega| \big) +2|\Omega|=4 |\Omega|=4{|V(G)|\over 3}~.$$
Now for each $v_i\in \Omega$, contract exactly one of two incident edges of $v_i$ in $G$ and, let $H$ be the resulting graph. Note that $|V(H)|=2|\Omega|$ and $|E(H)|=|E(G)|-|\Omega|$. Also, let $M\subseteq E(H)$ be the set of remaining incident edges of $v_i$'s (the elements of $\Omega$). 
Note that by the previous facts and this construction, $M$ is a perfect matching for $H$ and $S_M(H)=G$. This means that $G\in \mathcal{G}$, as desired.
\end{proof}

\begin{theorem} \label{BarS(G)}
Let $G$ be a  graph. Then, there exists an exact doubly dominating set $D$ for $\overline{S(G)}$ if and only if $G$ contains at least two isolated vertices and $D$ consists of two isolated vertices of $G$.
\end{theorem}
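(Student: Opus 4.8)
The plan is to reason directly about the complement $\overline{S(G)}$ on the vertex set $V(G)\cup Z$, reusing the notation fixed in the proof of Theorem~\ref{S(G)}. The first step is to record the adjacency structure of $\overline{S(G)}$. Since $V(G)$ and $Z$ are each independent in $S(G)$, both become cliques in $\overline{S(G)}$; furthermore, in $\overline{S(G)}$ the only non-neighbours of $z_{ij}$ are $v_i$ and $v_j$, while the only non-neighbours of $v_i$ are the subdivision vertices incident to it in $S(G)$. For a set $D$ with $|D|=t$, I would then translate the requirement ``$|N_{\overline{S(G)}}[x]\cap D|=2$ for every vertex $x$'' into two counting conditions: $|\{v_i,v_j\}\cap D|=t-2$ for every $z_{ij}\in Z$, and $|\{z_{ij}:v_iv_j\in E(G)\}\cap D|=t-2$ for every $v_i\in V(G)$.

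The sufficiency direction is then immediate: if $v_i,v_j$ are isolated in $G$, they stay isolated in $S(G)$ and hence are universal vertices of $\overline{S(G)}$, so $D=\{v_i,v_j\}$ lies in the closed neighbourhood of every vertex exactly twice, making it an exact doubly dominating set.

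For necessity, suppose $D$ is an exact doubly dominating set of $\overline{S(G)}$ with $|D|=t$. By Theorem~\ref{Matching}, $D$ induces a matching, so $t$ is even; and because $V(G)$ and $Z$ are cliques, a matching can meet each of them in at most two vertices, giving $|D\cap V(G)|\le 2$ and $|D\cap Z|\le 2$. If $G$ has an edge $v_iv_j$, the first counting condition forces $t-2=|\{v_i,v_j\}\cap D|\le 2$, so $t\in\{2,4\}$; I would discard $t=4$ by summing the second condition over all $v_i$, which yields $2n=2\,|D\cap Z|$, hence $|D\cap Z|=n$, contradicting $|D\cap Z|\le 2$ on a graph with an edge. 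If $G$ has no edge, then $\overline{S(G)}=K_n$ and exact double domination forces $t=2$ with every vertex isolated. Thus $t=2$ in every case, and the two counting conditions with $t=2$ give $D\cap Z=\emptyset$ together with the fact that no vertex of $D$ is an endpoint of an edge; that is, $D$ consists of two isolated vertices of $G$, which in particular must exist.

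The only genuinely delicate point is excluding the size-four configuration; everything else is a direct consequence of the two neighbourhood-count identities together with the clique and matching bounds supplied by Theorem~\ref{Matching}.
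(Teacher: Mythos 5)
Your proof is correct, and it takes a genuinely different route from the paper. The paper, after establishing the clique bounds $|D\cap V(G)|\le 2$ and $|D\cap Z|\le 2$, runs a case analysis on the value of $|D\cap V(G)|\in\{0,1,2\}$, eliminating the first two cases by exhibiting a vertex whose closed neighbourhood meets $D$ in the wrong number of elements, and then arguing inside the third case that $D\cap Z=\emptyset$ and that both vertices of $D$ are isolated. You instead exploit the complement structure globally: since $N_{\overline{S(G)}}[z_{ij}]$ omits exactly $\{v_i,v_j\}$ and $N_{\overline{S(G)}}[v_i]$ omits exactly the subdivision vertices incident to $v_i$, the exact-double-domination condition becomes the two identities $|\{v_i,v_j\}\cap D|=t-2$ and $|\{z_{ij}:v_iv_j\in E(G)\}\cap D|=t-2$ with $t=|D|$, and a parity-plus-double-counting argument pins down $t=2$ directly, after which both conclusions ($D\subseteq V(G)$ and both vertices isolated) drop out of the identities at once. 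This buys a shorter, more uniform argument with no case split; the paper's version is more elementary but longer. One small point to tighten: in excluding $t=4$ you derive $|D\cap Z|=n$ and cite the bound $|D\cap Z|\le 2$, which only yields a contradiction when $n\ge 3$; for a graph with an edge and $n=2$ you should instead note that $|Z|=1$ (or simply that $\overline{S(G)}$ has only three vertices, so $t=4$ is impossible). This is a one-line patch, not a gap in the method.
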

\begin{proof}
Obviously, each isolated vertex of $G$ is adjacent to all other vertices of $\overline{S(G)}$ in $\overline{S(G)}$.
Hence, if $G$ contains at least two isolated vertices, then each set $D$ consisting of two isolated vertices of $G$ is an exact doubly dominating set  for $\overline{S(G)}$. Note that if we use the previous notations, then $V(G)$ (and similarly, $Z$) induces an independent set in $S(G)$ and hence, induces a clique in its complement  $\overline{S(G)}$.\\
Now assume that $G$ is a graph such that there exists an exact doubly dominating set $D$ for $\overline{S(G)}$. 
Since for each $v_i\in V(G)$ we  have $\big| D\cap N_{_{\overline{S(G)}}}[v_i] \big|=2$ and $V(G)$ induces a clique in $\overline{S(G)}$, we must have $|D\cap V(G)|\leq 2$. Similarly, we have  $|D\cap Z|\leq 2$.\\
If $|D\cap V(G)| =0$, then by using Theorem \ref{Matching}, we obtain  $|D\cap Z|= 2$ and hence, $D=\{z_{ij},z_{rs}\}$ for two distinct vertices  $z_{ij},z_{rs}\in Z$.
Note that $z_{ij}\in Z$ implies that $v_iv_j\in E(G)$ and hence, $v_i$ is not adjacent to $z_{ij}$ in  $\overline{S(G)}$.
Thus, $\big| D\cap N_{_{\overline{S(G)}}}[v_i] \big| \leq 1$, which is a contradiction.\\
If $|D\cap V(G)| =1$, then $D\cap V(G)=\{v_i\}$ for some $v_i\in V(G)$ and Theorem \ref{Matching} implies that  there exists unique vertex  $z_{rs}\in D\cap Z$ which is adjacent to $v_i$ in  $\overline{S(G)}$ and hence, $v_i\notin \{v_r,v_s\}$.
Since $Z$ induces a clique, again Theorem \ref{Matching} implies that $D=\{v_i,z_{rs}\}$. Thus, $\big| D\cap N_{_{\overline{S(G)}}}[v_r] \big| =|\{v_i\}|= 1$, a contradiction. \\
Therefore, we must have $|D\cap V(G)| = 2$. Assume that $D\cap V(G)=\{v_i,v_j\}$. 
Note that $v_i$ and $v_j$ are adjacent in $\overline{S(G)}$.
By Theorem \ref{Matching}, for each $z_{rs}\in D$ we must have $\{v_r,v_s\}\cap \{v_i,v_j\}\neq\emptyset$ because $z_{rs}$ must be non-adjacent to $v_i$ and $v_j$ in  $\overline{S(G)}$.
If there exists $z_{is}\in D$ for some $s\neq j$, then 
$$\big| D\cap N_{_{\overline{S(G)}}}[v_j] \big| =|\{v_i,v_j,z_{is}\}|= 3,$$ a contradiction.
Similarly, $z_{js} \notin D$ for each $s\neq i$.
Thus, $D\subseteq \{v_i,v_j,z_{ij}\}$ and now Theorem \ref{Matching} implies that $D=\{v_i,v_j\}$.
Specially, each member of $Z$ must be adjacent to both $v_i$ and $v_j$ in $\overline{S(G)}$, which means that each member of $Z$ is non-adjacent to both $v_i$ and $v_j$ in $S(G)$. Hence, $v_i$ and $v_j$ are two isolated vertices in $G$ and the proof is complete.
\end{proof}
Let $G$ be graph with vertex set $V(G)=\{v_1,v_2,...,v_n\}$. The Mycielski graph $\mu(G)$ of $G$ is a graph of order $2n+1$ with the vertex set $V(G)\cup\{w,u_1,u_2,...,u_n\}$ and with the edge set $E(G)\cup \{v_iu_j:~v_iv_j\in E(G)\}\cup \{wu_i:~1\leq i\leq n\}$, see $\mu(P_5)$ in Figure \ref{subdiv-myciels}. 

\begin{theorem}
Let $G$ be an arbitrary graph. Then,  the Mycielskian graph $\mu(G)$ has not any exact doubly dominating set.
\end{theorem}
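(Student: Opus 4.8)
The plan is to derive a contradiction from the assumption that $\mu(G)$ admits an exact doubly dominating set $D$. The key structural feature to exploit is the apex vertex $w$, which is adjacent to every shadow vertex $u_1,\dots,u_n$ and nothing else. The condition $|N_{\mu(G)}[w]\cap D|=2$ together with Theorem~\ref{Matching} (which says $D$ induces a matching) immediately forces strong restrictions on how the $u_i$'s interact with $D$, and this is where I would begin.

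First I would analyze the closed neighborhood of $w$. We have $N_{\mu(G)}[w]=\{w,u_1,\dots,u_n\}$, so the condition $\big|D\cap N_{\mu(G)}[w]\big|=2$ means exactly two vertices of $\{w,u_1,\dots,u_n\}$ lie in $D$. I would split into the cases according to whether $w\in D$ or not. If $w\in D$, then exactly one $u_i$ lies in $D$; but by the matching property $w$ can be matched to at most one neighbor, and every neighbor of $w$ is some $u_j$, so this is consistent only if that single $u_i$ is the match of $w$. If $w\notin D$, then exactly two shadow vertices, say $u_i$ and $u_j$, lie in $D$, and these must dominate $w$ exactly twice. In either case only a bounded number of $u_k$'s can belong to $D$.

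Next I would examine the remaining shadow vertices. For any $u_k\notin D$, its closed neighborhood is $N_{\mu(G)}[u_k]=\{u_k,w\}\cup N_G(v_k)$ (the neighbors being the original vertices $v_i$ adjacent to $v_k$ in $G$). The exact double domination condition $\big|D\cap N_{\mu(G)}[u_k]\big|=2$ then constrains how many of $w$ and the $v_i\in N_G(v_k)$ can lie in $D$. The crucial point is that $w$ contributes the same amount to every $u_k$ simultaneously, so whether or not $w\in D$ propagates a uniform requirement across all the shadow vertices at once. Combining this with the neighborhood condition for the original vertices $v_i$ (whose closed neighborhoods in $\mu(G)$ include both original and shadow neighbors), I would aim to show that no consistent assignment of $D$ can satisfy all constraints simultaneously.

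The main obstacle I anticipate is handling the interaction between the original-vertex constraints and the shadow-vertex constraints without case explosion, since each $v_i$ sees both its $G$-neighbors among $\{v_j\}$ and the corresponding shadow neighbors $\{u_j\}$, while each $u_k$ sees $w$ and the original $G$-neighbors of $v_k$. The cleanest route is likely to fix the status of $w$ first (the two cases above), then show that in each case some vertex is necessarily dominated either once or three times. I would expect the contradiction to surface quickly from the apex vertex once the matching constraint of Theorem~\ref{Matching} is invoked, since $w$ forces almost all $u_k$ out of $D$, and then the shadow vertices that are excluded cannot simultaneously be dominated exactly twice.
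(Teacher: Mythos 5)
You start exactly where the paper starts: at the apex vertex $w$, using $\big|N_{\mu(G)}[w]\cap D\big|=2$ to force either $D\cap\{u_1,\dots,u_n\}=\{u_i\}$ (when $w\in D$) or $D\cap\{u_1,\dots,u_n\}=\{u_i,u_j\}$ (when $w\notin D$), with Theorem~\ref{Matching} in hand. But the proposal stops where the actual proof begins: in neither case do you exhibit a specific vertex whose closed neighborhood meets $D$ in a number different from two; you only announce that you ``would aim to show'' that some vertex is dominated once or three times. That identification is the entire content of the argument, and it is not routine. In the case $w\in D$, the paper observes that $\{w,u_i\}\subseteq N_{\mu(G)}[u_i]\cap D$ already exhausts the quota of $u_i$, so $N_G(v_i)\cap D=\emptyset$; combined with the facts that no $u_\ell$ with $\ell\neq i$ lies in $D$ and that $u_i$ is not adjacent to $v_i$ in $\mu(G)$, this yields $N_{\mu(G)}[v_i]\cap D\subseteq\{v_i\}$, so the \emph{original} vertex $v_i$ is under-dominated. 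In the case $w\notin D$, one must first use the matching property to locate the unique partner of $u_i$ inside $D$, necessarily some $v_r\in N_G(v_i)$, then exclude $v_i\in D$ by counting $N_{\mu(G)}[v_r]\cap D\supseteq\{v_i,v_r,u_i\}$, and only then contradict the double domination of $v_i$. None of these steps appears in the proposal.

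Your closing heuristic, that ``the shadow vertices that are excluded cannot simultaneously be dominated exactly twice,'' also points at the wrong vertices for the first case: there the failure occurs at an original vertex $v_i$, not at an excluded $u_k$. In the second case an excluded shadow vertex can indeed be made to fail (one can argue via $u_r$, whose closed neighborhood is $\{u_r,w\}\cup N_G(v_r)$, after the matching property forces $N_G(v_r)\cap D=\emptyset$), but this route is available only after the partner $v_r$ of $u_i$ has been identified, which is precisely the step the proposal omits. As written, the proposal is a correct setup followed by an unproved claim; supplying the two case analyses above turns it into the paper's proof.
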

\begin{proof}
Suppose on the contrary that $D\subseteq V(\mu(G))$ is an exact doubly dominating set for $\mu(G)$. Note that 
$N_{\mu(G)}(w)=\{u_1,u_2,...,u_n\}$. 
If $w\in D$, then Theorem \ref{Matching} implies that there exists unique vertex $u_i$, $1\leq i\leq n$, such that $u_i\in D$.
Hence, $D\cap \{u_1,u_2,...,u_n\}=\{u_i\}$.
Since $\{w,u_i\} \subseteq D\cap N_{\mu(G)}[u_i]$ and $\big| D\cap N_{\mu(G)}[u_i]\big| =2$, for each $v_j\in N_G(v_i)\subseteq N_{\mu(G)}(u_i)$ we have $v_j\notin D$. Thus, $N_{\mu(G)}[v_i] \cap D \subseteq \{v_i\}$ and hence, $|N_{\mu(G)}[v_i] \cap D|\neq 2$,  which is a contradiction.
This contradiction implies that $w\notin D$. 
Since $|N_{\mu(G)}[w] \cap D|=2$, we must have $D\cap \{u_1,u_2,...,u_n\}=\{u_i,u_j\}$ for some $i\neq j$.
By Theorem \ref{Matching}, $u_i$ is adjacent to a (unique) vertex of $D$. Hence, there exists unique vertex $v_r\in N_G(v_i) =\big(N_{\mu(G)}(u_i)\cap V(G)\big)$ such that $v_r\in D$.
If $v_i\in D$, then $\{v_i,v_r,u_i\}\subseteq \big( N_{\mu(G)}[v_r]\cap D\big)$, which is a contradiction. Thus, $v_i\notin D$ and this implies that $N_{\mu(G)}[v_i]\cap D=\{v_r\}$. This also leads to another contradiction and hence, the proof is complete.
\end{proof}


\begin{theorem}
The complement of the Mycielskian graph $\mu(G)$ (i.e., $\overline{\mu(G)}$) has an exact doubly dominating set $D$ if and only if
$D$ consists of exactly two isolated vertices of $G$. 
\end{theorem}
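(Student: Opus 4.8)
The plan is to first spell out the adjacency structure of $\overline{\mu(G)}$, since the whole statement follows from it. In $\mu(G)$ the set $U=\{u_1,\dots,u_n\}$ is independent, $w$ is joined to every $u_i$ and to no $v_i$, and $v_iu_j$ is an edge exactly when $v_iv_j\in E(G)$. Passing to the complement, in $\overline{\mu(G)}$ the set $U$ becomes a clique, $w$ is adjacent to every $v_i$ and to no $u_i$ (so $N_{\overline{\mu(G)}}[w]=\{w\}\cup V(G)$ while $w\notin N_{\overline{\mu(G)}}[u_k]$ for all $k$), and $v_i$ is adjacent to $u_k$ precisely when $v_iv_k\notin E(G)$ (in particular $v_iu_i$ is always an edge, as $G$ is loopless). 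The key remark is that an isolated vertex of $G$ becomes a \emph{universal} vertex of $\overline{\mu(G)}$, adjacent to all other $2n$ vertices; this gives the ``if'' direction at once, since for two isolated vertices $v_a,v_b$ every other vertex sees both of them while each of $v_a,v_b$ sees the other, so $D=\{v_a,v_b\}$ is an exact doubly dominating set.

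For the converse I would first record the matching property of Theorem \ref{Matching} in sharp form: the condition $|N[x]\cap D|=2$ for $x\in D$ means $x$ has exactly one neighbour inside $D$, so $D$ induces a perfect matching and $|D|$ is even. Two structural facts then drive everything. Since $U$ is a clique, $U\subseteq N[u_k]$ for every $k$, forcing $|D\cap U|\le 2$; and the equality $|N[w]\cap D|=2$ reads $|(\{w\}\cup V(G))\cap D|=2$. I would next rule out $w\in D$: in that case exactly one $v_i$ lies in $D$, and counting $N[u_k]\cap D$ (remembering $w\notin N[u_k]$) would force $|D\cap U|=1$ together with $v_i$ isolated, giving $|D|=3$ and contradicting the evenness of $|D|$.

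Knowing $w\notin D$, the $w$-equality gives $D\cap V(G)=\{v_i,v_j\}$ with $i\neq j$, so $D=\{v_i,v_j\}\cup(D\cap U)$. The core step is a single counting identity: for each $k$, using $U\subseteq N[u_k]$ and $w\notin N[u_k]$ and writing $c=|D\cap U|$,
$$c+[\,v_iv_k\notin E(G)\,]+[\,v_jv_k\notin E(G)\,]=2,$$
where $[\cdot]$ is the indicator of the displayed non-adjacency. Evaluating this at $k=i$, where the first indicator equals $1$, shows $c\le 1$, and since $|D|=2+c$ must be even I conclude $c=0$, whence $D=\{v_i,v_j\}$. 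Substituting $c=0$ back forces both indicators to equal $1$ for every $k$, i.e. $v_iv_k,\,v_jv_k\notin E(G)$ for all $k$, which says exactly that $v_i$ and $v_j$ are isolated in $G$; this is the claimed conclusion.

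The main obstacle I anticipate is bookkeeping rather than any hard estimate. The subtle point is that, unlike $U$, the set $\{w\}\cup V(G)$ is \emph{not} a clique, so I cannot bound $|D\cap V(G)|$ by the clique trick used in the proof of Theorem \ref{BarS(G)} and must instead read it off from $N[w]$. The other place demanding care is the adjacency dictionary of $\overline{\mu(G)}$, in particular the loopless subtlety that $v_iu_i$ is always present; getting these right is what makes the clean identity above valid, after which the parity of $|D|$ collapses every remaining case.
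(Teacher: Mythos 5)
Your proposal is correct and follows essentially the same route as the paper: rule out $w\in D$ by the parity/matching contradiction from Theorem \ref{Matching}, read $|D\cap V(G)|=2$ off $N_{_{\overline{\mu(G)}}}[w]$, bound $|D\cap \{u_1,\dots,u_n\}|$ using the clique on the $u_k$'s together with the edge $v_iu_i$, and conclude that $v_i,v_j$ must be isolated. The only cosmetic difference is that you extract the isolation of $v_i$ and $v_j$ from the neighborhoods $N_{_{\overline{\mu(G)}}}[u_k]$ via your counting identity, whereas the paper reads it off the neighborhoods $N_{_{\overline{\mu(G)}}}[v_r]$ for $v_r\notin\{v_i,v_j\}$.
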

\begin{proof}
Let $D\subseteq V(\overline{\mu(G)})=V(\mu(G))$ be an exact doubly dominating set for $\overline{\mu(G)}$.
Note that the set $\{u_1,u_2,...,u_n\}$ induces a clique in $\overline{\mu(G)}$ and we have $N_{_{\overline{\mu(G)}} }(w)=\{v_1,v_2,...,v_n\}$. \\
If $w\in D$, then Theorem \ref{Matching} implies that there exists unique vertex $v_i$, $1\leq i\leq n$, such that $D\cap \{v_1,v_2,...,v_n\}=\{v_i\}$. Since $v_iu_i\in E(\overline{\mu(G)})$ and $wu_i\notin E(\overline{\mu(G)})$, the condition  $|N_{_{\overline{\mu(G)}}}[u_i] \cap D|=2$ implies that $|D\cap \{u_1,u_2,...,u_n\}|=1$. This means that $|D|=3$ which contradicts Theorem  \ref{Matching}. 
Thus, $w\notin D$. Since $|N_{_{\overline{\mu(G)}}}[w]\cap D|=2$, there exist $i\neq j$ such that $D\cap \{v_1,v_2,...,v_n\}=\{v_i,v_j\}$.
Since the set $\{u_1,u_2,...,u_n\}$ induces a clique in $\overline{\mu(G)}$, $v_iu_i\in E(\overline{\mu(G)})$ and $|N_{_{\overline{\mu(G)}}}[u_i]\cap D|=2$, we must have $|D\cap \{u_1,u_2,...,u_n\}|\leq 1$. Thus, 
$$|D|=\big| D\cap \{v_1,v_2,...,v_n\} \big| +\big| D\cap \{u_1,u_2,...,u_n\} \big| \leq 2+1.$$
Now Theorem  \ref{Matching} implies that $|D| =2$. Hence, $D=\{v_i,v_j\}$ and again Theorem \ref{Matching} implies that $v_iv_j\in E(\overline{\mu(G)})$.
This means that $v_iv_j\notin E(\mu(G))$. Also, for each $v_r\in V(G)\setminus \{v_i,v_j\}$ the condition $|N_{_{\overline{\mu(G)}}}[v_r]\cap D|=2$ implies that 
$\{v_rv_i,v_rv_j\} \subseteq E(\overline{\mu(G)})$, which equivalently means that $v_rv_i,v_rv_j \notin E(G)$.
Therefore,  $D$ consists of  two isolated vertices $v_i$ and $v_j$ of $G$.
The converse is obvious, and the proof is complete.
\end{proof}


Recall that the line graph $L(G)$ of  $G$ is the graph with vertex set $E(G)$ in which $e$ and $e'$ are adjacent in $L(G)$ if and only if the corresponding edges  share a common vertex in $G$. 
The concept of middle graph $M(G)$ of $G$ was introduced by Hamada and Yoshimura in \cite{Hamada} as an intersection graph on the vertex set of $G$, whose vertex set is $V(G)\cup E(G)$ and two vertices $a,b$ in its vertex set are adjacent whenever $a,b \in E(G)$ and $a,b$ are adjacent in $L(G)$, or $a\in V (G), b\in E(G)$ and $a,b$ are incident in $G$. 
When $V(G)=\{v_1,v_2,...,v_n\}$, then for convenient  we can set $V\big(M(G)\big)=V (G)\cup Z$, where $Z=\{z_{ij}:~ v_iv_j\in E(G)\}$ and $$E\big(M(G)\big)=\{v_iz_{ij},v_jz_{ij}:~ v_iv_j\in E(G)\}\cup E\big(L(G)\big).$$
Thus, $M(G)$ is a graph of order $|V(G)|+|E(G)|$ and size $2|E(G)|+|E(L(G))|$ and it contains the line graph $L(G)$ as an induced subgraph, see $M(C_4)$ in Figure \ref{middle}. 
The domination number of the middle of some famous families of graphs such as star graphs, double stars, paths, cycles, wheels, complete graphs, complete bipartite graphs and friendship graphs is considered and determined in \cite{MiddleDom}.

\begin{center}
\begin{tikzpicture}
[inner sep=0.5mm, place/.style={circle,draw=black,fill=red,thick}]

\node[circle,draw=black,fill=white,thick] (v1) at (-1,0) [label=above:$v_1$] {};
\node[circle,draw=black,fill=white,thick] (v2) at (1,0) [label=above:$v_2$] {} edge [-,thick,blue](v1) ; 
\node[circle,draw=black,fill=white,thick] (v3) at (1,-2) [label=below:$v_3$] {} edge [-,thick,blue](v2) ;
\node[circle,draw=black,fill=white,thick] (v4) at (-1,-2) [label=below:$v_4$] {} edge [-,thick,blue](v3) edge [-,thick,blue](v1) ;
\node (dots) at (0,-3.5) [label=center: $C_4$]{};

\node[circle,draw=black,fill=white,thick] (v1) at (4,0) [label=above:$v_1$] {};
\node[circle,draw=black,fill=white,thick] (v2) at (6,0) [label=above:$v_2$] {} edge [-,thick,blue](v1) ; 
\node[circle,draw=black,fill=white,thick] (v3) at (6,-2) [label=below:$v_3$] {} edge [-,thick,blue](v2) ;
\node[circle,draw=black,fill=white,thick] (v4) at (4,-2) [label=below:$v_4$] {} edge [-,thick,blue](v3) edge [-,thick,blue](v1) ;
\node[circle,draw=black,fill=white,thick] (m12) at (5,0) [label=above:$z_{_{12}}$] {};
\node[circle,draw=black,fill=white,thick] (m23) at (6,-1) [label=right:$z_{_{23}}$] {} edge [-,thick,blue](m12) ; 
\node[circle,draw=black,fill=white,thick] (m34) at (5,-2) [label=below:$z_{_{34}}$] {} edge [-,thick,blue](m23) ;
\node[circle,draw=black,fill=white,thick] (m41) at (4,-1) [label=left:$z_{_{41}}$] {} edge [-,thick,blue](m34) edge [-,thick,blue](m12) ;
\node (dots) at (5,-3.5) [label=center: $M(C_4)$]{};

\node[circle,draw=black,fill=white,thick] (v1) at (9.2,-0.2) [label=above:$v_1$] {};
\node[circle,draw=black,fill=white,thick] (v2) at (10.8,-0.2) [label=above:$v_2$] {} edge [-,thick,blue](v1) ; 
\node[circle,draw=black,fill=white,thick] (v3) at (10.8,-1.8) [label=below:$v_3$] {} edge [-,thick,blue](v1) edge [-,thick,blue](v2) ;
\node[circle,draw=black,fill=white,thick] (v4) at (9.2,-1.8) [label=below:$v_4$] {} edge [-,thick,blue](v1) edge [-,thick,blue](v2) edge [-,thick,blue](v3) ;
\node[circle,draw=black,fill=green,thick] (m12) at (10,0) [label=above:$z_{_{12}}$] {} edge [-,thick,blue](v3) edge [-,thick,blue](v4) ;
\node[circle,draw=black,fill=green,thick] (m23) at (11,-1) [label=right:$z_{_{23}}$] {} edge [-,thick,blue](v1) edge [-,thick,blue](v4) ; 
\node[circle,draw=black,fill=green,thick] (m34) at (10,-2) [label=below:$z_{_{34}}$] {} edge [-,thick,red](m12) edge [-,thick,blue](v1) edge [-,thick,blue](v2);
\node[circle,draw=black,fill=green,thick] (m41) at (9,-1) [label=left:$z_{_{41}}$] {} edge [-,thick,red](m23) edge [-,thick,blue](v2) edge [-,thick,blue](v3) ;
\node (dots) at (10,-3.5) [label=center: $\overline{M(C_4)}$]{};

\end{tikzpicture}
\captionof{figure}{    \label{middle}}
\end{center}

\begin{theorem}
Let $G$ be an arbitrary graph. Then,  the middle graph $M(G)$ has not any exact doubly dominating set.
\end{theorem}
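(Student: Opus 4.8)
The plan is to assume, for contradiction, that $D$ is an exact doubly dominating set for $M(G)$ and then to extract an impossible local count around a single edge of $G$. First I would record the two basic neighborhood facts, using the notation $V(M(G))=V(G)\cup Z$ with $Z=\{z_{ij}:~v_iv_j\in E(G)\}$. For a vertex $v_i\in V(G)$, since $V(G)$ is independent in $M(G)$ and $v_i$ is joined exactly to the subdivision vertices of its incident edges, we have $N_{M(G)}[v_i]=K_i$, where $K_i:=\{v_i\}\cup\{z_{ij}:~v_iv_j\in E(G)\}$. For a subdivision vertex $z_{ij}$, its closed neighborhood consists of $z_{ij}$, the two incident vertices $v_i,v_j$, and every $z$-vertex sharing an endpoint with the edge $v_iv_j$; a direct check shows this is exactly $N_{M(G)}[z_{ij}]=K_i\cup K_j$.

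The heart of the argument is a short inclusion--exclusion. I would first dispose of the trivial case in which $G$ has no edges: then every vertex of $M(G)=G$ is isolated, so its closed neighborhood has size one and cannot meet $D$ in two points, ruling out an exact doubly dominating set (for nonempty $G$). So assume $G$ has an edge $v_iv_j$, giving a vertex $z_{ij}\in Z$. Applying the defining condition $|D\cap N_{M(G)}[x]|=2$ to the three vertices $v_i$, $v_j$, and $z_{ij}$ yields $|D\cap K_i|=2$, $|D\cap K_j|=2$, and $|D\cap(K_i\cup K_j)|=2$.

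Now I would combine these via $|D\cap(K_i\cup K_j)|=|D\cap K_i|+|D\cap K_j|-|D\cap(K_i\cap K_j)|$, which forces $|D\cap(K_i\cap K_j)|=2$. The final step is the computation of the intersection $K_i\cap K_j$: since $i\neq j$ and $G$ is simple, the only vertex lying in both $K_i$ and $K_j$ is $z_{ij}$ itself, so $K_i\cap K_j=\{z_{ij}\}$. But then $2=|D\cap\{z_{ij}\}|\leq 1$, the desired contradiction. I expect the only delicate point to be verifying the two neighborhood identities, in particular that $N_{M(G)}[z_{ij}]=K_i\cup K_j$ and that $K_i\cap K_j$ collapses to the single vertex $z_{ij}$; once these are pinned down the contradiction is immediate and does not even require Theorem \ref{Matching}. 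I would likely add the remark that the matching theorem could instead be invoked here (each $K_i$ is in fact a clique, so $D$ meets it in at most two vertices), but the inclusion--exclusion route is cleaner and entirely self-contained.
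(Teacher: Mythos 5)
Your proof is correct and follows essentially the same route as the paper's: both exploit the fact that $N_{M(G)}[z_{ij}]=N_{M(G)}[v_i]\cup N_{M(G)}[v_j]$ for an edge $v_iv_j$ and derive a contradiction from the three closed neighborhoods each meeting $D$ in exactly two vertices. The paper phrases this asymmetrically (first locating some $z_{ij}\in D$ adjacent to $v_i$, then concluding that $v_j$ is dominated only once), while your inclusion--exclusion formulation is symmetric and slightly cleaner, but it is the same local argument around a single edge.
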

\begin{proof}
Note that if $E(G)=\emptyset$, then $E(M(G))=\emptyset$ and hence, $M(G)$ has not any exact doubly dominating set.
Thus, let $E(G)\neq \emptyset$ and assume (on the contrary) that there exists an exact doubly dominating set $D\subseteq V(M(G))$  for $M(G)$.
Choose an arbitrary vertex $v_i\in V(G)$ with $\deg_G(v_i)\geq 1$.
Since $|N_{M(G)}[v_i]\cap D|=2$ and $N_{M(G)}[v_i] \cap V(G)=\{v_i\}$, there exists at least one vertex $z_{ij}\in N_{M(G)}[v_i]\cap D$. 
Hence, we have   $v_j \in N_G(v_i)$.\\ 
Since $N_{M(G)}[v_i] \subseteq N_{M(G)}[z_{ij}]$ and 
$$\big| N_{M(G)}[m_{ij}]\cap D \big| =2= \big| N_{M(G)}[v_i]\cap D \big|,$$ 
we must have $v_j\notin D$ and $m_{js}\notin D$ for each $v_s \in N_G(v_j)\setminus\{v_i\}$. These facts imply that $N_{M(G)}[v_j]\cap D=\{m_{ij}\}$, which is a contradiction.
Therefore, $M(G)$ has not any exact doubly dominating set.
\end{proof}

\begin{theorem}
The complement of the middle graph $M(G)$ (i.e., $\overline{M(G)}$) has an exact doubly dominating set $D$ if and only if $G$ contains at least two isolated vertices and $D$ consists of two isolated vertices of $G$ or,  $G$ is (isomorphic to) the cycle $C_4$ and $D=E(C_4)$. 
\end{theorem}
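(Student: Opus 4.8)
The plan is to treat the two implications separately, with the converse (sufficiency) a short verification and the forward direction (necessity) carrying the real content via a case analysis on $a:=|D\cap V(G)|$. For sufficiency I would first note that an isolated vertex of $G$ is isolated in $M(G)$ and hence universal in $\overline{M(G)}$, so two isolated vertices $v_i,v_j$ form an exact doubly dominating set: every vertex is adjacent to both, and $v_i,v_j$ are adjacent to each other because $V(G)$ is independent in $M(G)$. For $G\cong C_4$ with $D=E(C_4)=Z$, I would check directly from the adjacency rules of $\overline{M(C_4)}$ (each $v_i$ is $\overline{M}$-adjacent precisely to the two edges not incident to it, and $L(C_4)=C_4$, so $Z$ induces its complement $2K_2$) that every vertex of $\overline{M(C_4)}$ has exactly two neighbours in $Z$.

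For necessity, the two structural facts I would record at the outset are: (i) $V(G)$ is independent in $M(G)$, hence a clique in $\overline{M(G)}$, which forces $|D\cap V(G)|\le 2$; and (ii) by Theorem~\ref{Matching} the set $D$ induces a matching in $\overline{M(G)}$, so $|D|$ is even and every vertex of $D$ has a \emph{unique} $D$-neighbour. I then split into $a\in\{2,1,0\}$. In the case $a=2$, writing $D\cap V(G)=\{v_i,v_j\}$, the clique property makes $v_i,v_j$ a matched pair, so no further $D$-vertex may be $\overline{M}$-adjacent to either of them; a vertex $z_{rs}\in D$ would then be forced to satisfy $\{v_r,v_s\}=\{v_i,v_j\}$, but such a $z_{ij}$ has no available $D$-partner, a contradiction. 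Hence $D=\{v_i,v_j\}$, and requiring every $z_{rs}$ to be doubly dominated forces $v_i,v_j\notin\{v_r,v_s\}$ for all edges, i.e.\ $v_i$ and $v_j$ are isolated in $G$.

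The case $a=1$ I expect to eliminate. If $D\cap V(G)=\{v_i\}$ with unique $\overline{M}$-partner $z_{rs}$ (so $v_i\notin\{v_r,v_s\}$), then double-domination of the vertices outside $\{v_i,v_r,v_s\}$ shows that $v_i$ and $z_{rs}$ already dominate them, which forces all remaining $D$-edges to pass through a common endpoint; since edges sharing an endpoint are pairwise non-adjacent in $\overline{M(G)}$, they cannot be matched among themselves, collapsing $D$ to $\{v_i,z_{rs}\}$, after which $v_r$ is dominated only once. The main work is the case $a=0$, where $D\subseteq Z$: here double-domination of each $v_i$ says that exactly two edges of $D$ are non-incident to $v_i$, so the underlying edge set $F\subseteq E(G)$ satisfies $\deg_F(v_i)=|D|-2$ for every vertex. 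Summing degrees gives $n(|D|-2)=2|D|$, and together with $|D|$ even and the bound $\deg_F(v_i)\le n-1$ this forces $|D|=4$ and $n=4$, so $F$ is a $2$-regular simple graph on four vertices, i.e.\ a $4$-cycle. A chord of this cycle would produce a vertex $z_{pq}\in Z\setminus D$ that is $\overline{M}$-non-adjacent to all four cycle-edges, hence undominated; therefore $G\cong C_4$ and $D=E(C_4)$.

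I expect the $a=0$ analysis to be the crux, both for the degree-counting that pins $|D|$ and $n$ to $4$ and for the final chord argument ruling out extra edges. The other place demanding care is the bookkeeping in the $a=1$ case when $n$ is small (in particular $n=3$, where no vertex lies outside $\{v_i,v_r,v_s\}$), and there I would fall back on the same observation that any two $z$-vertices sharing an endpoint are non-adjacent in $\overline{M(G)}$ and so cannot be matched.
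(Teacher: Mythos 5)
Your proposal is correct, and while the sufficiency check and Cases $a=2$, $a=1$ run essentially parallel to the paper's proof, your treatment of the case $D\subseteq Z$ is a genuinely different argument. The paper proceeds structurally: it fixes $z_{ij}\in D$, uses Theorem \ref{Matching} to produce a matched partner $z_{i'j'}$ with disjoint endpoints, observes that $v_i$ forces $|D|>2$, and then shows every further element of $D$ must share an endpoint with both $z_{ij}$ and $z_{i'j'}$, which pins $D$ down to one of two explicit $4$-sets tracing a $4$-cycle. You instead translate the double-domination condition on each $v_i$ into the regularity statement $\deg_F(v_i)=|D|-2$ for the edge set $F$ underlying $D$, and let the handshake identity $n(|D|-2)=2|D|$ together with parity and $\deg_F(v_i)\le n-1$ force $|D|=n=4$ and $F\cong C_4$; your chord argument then excludes extra edges, and the degree equation itself excludes extra vertices (every vertex has positive $F$-degree). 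Your counting route is arguably tighter at the end, where the paper's closing sentence (``$G$ can not have extra vertices nor extra edges'') is asserted rather than argued, though the paper's route has the virtue of exhibiting the two admissible sets $D$ explicitly. One small point in your Case $a=1$: the cleanest source of the ``common endpoint'' is not the vertices outside $\{v_i,v_r,v_s\}$ but the domination condition at $v_i$ itself, since $N_{\overline{M(G)}}[v_i]\cap D=\{v_i,z_{rs}\}$ already forces every further $D$-edge to be incident to $v_i$ in $G$; this also disposes of the small-$n$ bookkeeping you flag, by exactly the non-matchability observation you name.
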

\begin{proof}
For convenience, let $\mathscr{M}=M(G)$ and $\bar{\mathscr{M}}=\overline{M(G)}$.
If $G$ contains at least two isolated vertices, say $x$ and $y$, then it is easy to see that $D=\{x,y\}$ is an exact doubly dominating set for $\bar{\mathscr{M}}$.
Also, if $G=C_4$, then it is easy to check that $D=E(C_4)$  is an exact doubly dominating set for $\overline{M(C_4)}$, see Figure \ref{middle} in which the green vertices are the elements of $D$.\\
Now assume that $G$ is a graph such that $\bar{\mathscr{M}}$ has an exact doubly dominating set $D$.
Note that $V(G)\subseteq V(\mathscr{M})$ induces an independent set in $\mathscr{M}$ and hence, it induces a clique in $\bar{\mathscr{M}}$. Since  $|N_{\bar{\mathscr{M}}}[v_i]\cap D|=2$ for each $v_i\in V(G)$, we must have $|D\cap V(G)|\leq 2$. Hence, we consider the following three cases.
\\\\
{\bf Case 1.} $|D\cap V(G)| = 2$.\\
Suppose that $D\cap V(G)=\{v_i,v_j\}$. For each $v_{i'}\in V(G)$ we have $\{v_i,v_j\} \subseteq N_{\bar{\mathscr{M}}}[v_{i'}]$ and hence, 
$ N_{\bar{\mathscr{M}}}[v_{i'}] \cap D=\{v_i,v_j\}$ because $D$ is an exact doubly dominating set.
If there exists $z_{rs}\in D\cap Z$, then  the fact 
$$ N_{\bar{\mathscr{M}}}[v_i] \cap D=\{v_i,v_j\}=N_{\bar{\mathscr{M}}}[v_j] \cap D$$ 
 implies that $z_{rs}$ is non-adjacent to $v_i$ and $v_j$ in $\bar{\mathscr{M}}$ and hence, in $G$ two vertices $v_i$ and $v_j$ are incident to the edge $v_rv_s$. This means that $v_rv_s=v_iv_j$. Therefore, $D=\{v_i,v_j,z_{ij}\}$ which contradicts Theorem \ref{Matching}.
This contradiction shows that $D\cap Z=\emptyset$ and hence, $D=\{v_i,v_j\}$. If there exists an edge $v_iv_r$ in $G$ which is incident to the vertex $v_i$ in $G$, then we obtain $N_{\bar{\mathscr{M}}}[z_{ir}] \cap D\subseteq \{v_j\}$, a contradiction.
Thus,  $v_i$ (and similarly $v_j$) is an isolated vertex in $G$. This means that $D$ consists of two isolated vertices of $G$ and hence, the proof is complete in this case.
\\\\
{\bf Case 2.} $|D\cap V(G)| = 1$.\\
Assume that $D\cap V(G)=\{v_i\}$. Since $|N_{\bar{\mathscr{M}}}[v_i] \cap D|=2$, there exists a (unique) vertex $z_{rs}\in D\cap Z$ which is adjacent to $v_i$ in $\bar{\mathscr{M}}$. This implies that $v_rv_s\in E(G)$ and $v_i\notin \{v_r,v_s\}$.
Since  $|N_{\bar{\mathscr{M}}}[v_r] \cap D|=2$ and $v_i\in N_{\bar{\mathscr{M}}}[v_r]$, there exists a vertex $z_{r's'}\in D\cap Z$ which is adjacent to $v_r$ in $\bar{\mathscr{M}}$. Thus, $v_{r'}v_{s'}\in E(G)$ and $v_r\notin\{v_{r'},v_{s'}\}$. 
Note that by Theorem \ref{Matching}, $v_i$ and $z_{rs}$ are not adjacent to $z_{r's'}$ in $\bar{\mathscr{M}}$. Hence, we must have $v_i\in \{v_{r'},v_{s'}\}$ and $v_s\in \{v_{r'},v_{s'}\}$. These facts imply that $z_{r's'}=z_{is}$. Specially, we have $v_iv_s\in E(G)$.
By Theorem \ref{Matching}, $D$ induces a matching in $\bar{\mathscr{M}}$ and hence, there exists $z_{r''s''}\in D\cap Z$ which is adjacent to $z_{is}$.
Similarly, Since $v_i$ and $z_{rs}$ are not adjacent to $z_{r''s''}$ in $\bar{\mathscr{M}}$, we must have $v_i\in \{v_{r''},v_{s''}\}$ and $v_r\in \{v_{r''},v_{s''}\}$. Thus, $z_{r''s''}=z_{ir}$ and hence, $v_iv_r\in E(G)$. 
Since two edges $v_iv_r$ and $v_iv_s$  in $G$ share the common endpoint $v_i$, two vertices 
$z_{ir}$ and $z_{is}$ are not adjacent in $\bar{\mathscr{M}}$, which means $z_{is}$ is not adjacent to $z_{r''s''}$ in $\bar{\mathscr{M}}$. This is a contradiction.
Therefore, this case leads to a contradiction and hence, is impossible.
\\\\
{\bf Case 3.} $|D\cap V(G)| = 0$.\\
In this case, me must have $D\cap Z\neq\emptyset$.
Assume that $z_{ij}\in D\cap Z$. Since $z_{ij}\in Z$, we have $v_iv_j\in E(G)$. By Theorem \ref{Matching}, there exists $z_{i'j'}\in D$ which is adjacent to $z_{ij}$ in $\bar{\mathscr{M}}$. Therefore, $v_{i'}v_{j'}\in E(G)$ and $\{v_i,v_j\}\cap\{v_{i'},v_{j'}\}=\emptyset$.
Note that $z_{ij} \notin N_{\bar{\mathscr{M}}}[v_i]$ and hence, $\{z_{ij},z_{i'j'}\}$ is a proper subset of $D$.
For each $z_{i''j''}\in D\setminus\{z_{ij},z_{i'j'}\}$, Theorem \ref{Matching} implies that $z_{i''j''}$ is not adjacent to $z_{ij}$ and $z_{i'j'}$ in $\bar{\mathscr{M}}$, and hence,  we must have 
$$\{i'',j''\}\cap \{i,j\} \neq \emptyset \neq \{i'',j''\}\cap \{i',j'\}.$$
Thus, we have $D=\{z_{ij},z_{i'j'},z_{ii'},z_{jj'}\}$ or $D=\{z_{ij},z_{i'j'},z_{ij'},z_{ji'}\}$. In each of these two cases, the four elements of $D$ provides a cycle $C_4$ in $G$ and $D$ corresponds to the four edges of this $4$-cycle. Since $|D|=4$ and each vertex in $\overline{M(C_4)}$ is dominated exactly twice, $G$ can not have extra vertices nor extra edges. This means that $G=C_4$ as desired. Now the proof is complete.
\end{proof}


{\bf Acknowledgments:} The authors warmly thank the anonymous referees for reading this manuscript
 carefully and providing numerous valuable corrections and suggestions which improve the quality of this paper.
\\\\
The authors declare that they have no competing interests.



\begin{thebibliography}{10}

\bibitem{S-Ahmadi}
S. Ahmadi, E. Vatandoost,  A. Behtoei, Domination number and identifying code number of the subdivision graphs, {\it Journal of Algebraic Systems}, to appear,  DOI: 10.22044/jas.2023.12257.1649.


\bibitem{BangeSiam}
D. W. Bange, A. E. Barkauskas, P. J. Slater, Efficient dominating sets in graphs, {\it Applications of discrete mathematics}, 189-199,
SIAM, Philadelphia, PA, 1988.

\bibitem{Biggs}
N. Biggs, Perfect codes in graphs, {\it J. Combin. Theory (B)}, {\bf 15} (1973), 289-296.

\bibitem{Chellali2005}
M. Chellali, A. Khelladi, F. Maffray, Exact double domination in graph, {\it Discuss. Math. Graph Theory}, {\bf 25} (2005) 291-302. 

\bibitem{PerfectDom}
E. J. Cockayne, B. L. Hartnell, S. T. Hedetniemi, R. Laskar, Perfect domination in graphs, {\it J. Combin. Inform. System Sci.}, {\bf 18} (1993) 136-148.

\bibitem{Fellows}
M. R. Fellows, M. N. Hoover, Perfect domination, Australas. {\it J. Combin.}, {\bf 3} (1991) 141-150.

\bibitem{Fink}
J. F. Fink,  M. S. Jacobson, $n$-domination in graphs, Graph Theory with
Appl. to algorithms and Comp. Sci., Wiley,
NewYork (1985), 283-300.

\bibitem{Yero1}
A. Ghalavand, S.  Klav$\check{z}$ar, M. Tavakoli, I. G. Yero, On mixed metric dimension in subdivision, middle and total graphs, {\it Quaestiones Mathematicae}, {\bf 46} (2023) 2517-2527.


\bibitem{Hamada}
T. Hamada, I. Yoshimura, Traversability and connectivity of the middle graph of a graph, {\it Discrete Math.}, {\bf 14} (1976) 247-255.

\bibitem{Harary2000}
F. Harary, T.W. Haynes, Double domination in graphs, {\t Ars Combin.}, {\bf 55} (2000) 201-213.

\bibitem{Harary1996}
F. Harary, T.W. Haynes, Nordhaus-Gaddum inequalities for domination in graphs, {\t Discrete Mathematics}, {\bf 155} (1996) 99-105.

\bibitem{MiddleDom}
F. Kazemnejad, B. Pahlavsay, E. Palezzato, M. Torielli, Domination number of middle graphs, {\it Transactions on Combinatorics}, {\bf 12} (2023) 79-91.

\bibitem{REF4}
R. Khoeilar, M. Kheibari, Z. Shao, S. M. Sheikholeslami, Total k-rainbow domination subdivision number in graphs, {\it Computer Science Journal of Moldova}, {\bf 28} (2020)  152-169.

\bibitem{Perfect-Codes}
S. Klav$\check{z}$ar, U. Milutinovi$\check{c}$,  C. Petr,  1-perfect codes in Sierpi\'nski graphs, {\it Bull. Austral. Math. Soc.}, {\bf 66} (2002) 369-384.

\bibitem{Ramezani}
F. Ramezani, E.D. Rodriguez-Bazan,  J.A. Rodriguez-Vel\'azquez, On the Roman domination number of generalized Sierpi\'ski graphs, {\it Filomat}, {\bf 31} (2017) 6515-6528. 
 
 \bibitem{Salkhori}
 R. Y. Salkhori, E. Vatandoost, Ali Behtoei, 2-rainbow domination number of the subdivision of graphs, to appear in {\it Communications in Combinatorics and Optimization}, DOI: 10.22049/cco.2024.28850.1749.

\end{thebibliography}
\end{document}